\begin{document}

\newtheorem{tm}{Theorem}[section]
\newtheorem{pp}{Proposition}[section]
\newtheorem{lm}{Lemma}[section]
\newtheorem{df}{Definition}[section]
\newtheorem{tl}{Corollary}[section]
\newtheorem{re}{Remark}[section]
\newtheorem{eap}{Example}[section]

\newcommand{\pof}{\noindent {\bf Proof} }
\newcommand{\ep}{$\quad \Box$}

\newcommand{\al}{\alpha}
\newcommand{\be}{\beta}
\newcommand{\var}{\varepsilon}
\newcommand{\la}{\lambda}
\newcommand{\de}{\delta}
\newcommand{\st}{\stackrel}

\allowdisplaybreaks

\begin{frontmatter}

\title{Some notes on approximately greater than relations on fuzzy sets
 }

\author{Huan Huang}
\ead{hhuangjy@126.com}
\address{Department of Mathematics, Jimei
University, Xiamen 361021, China}

\begin{abstract}

We consider approximately greater than relations on fuzzy sets and discuss their properties.

\end{abstract}

\begin{keyword}
 Fuzzy sets; approximately greater than; relation
\end{keyword}

\end{frontmatter}

\section{Introduction and preliminaries}

We use $F(\mathbb{R}^m)$ to represent all
fuzzy sets on $\mathbb{R}^m$, i.e. functions from $\mathbb{R}^m$
to $[0,1]$.
$
\mathbf{2}^{\mathbb{R}^m }:=  \{  S: S\subseteq \mathbb{R}^m      \}
$
can be embedded in $F (\mathbb{R}^m)$, as any $S \subset \mathbb{R}^m$ can be
seen as its characteristic function, i.e. the fuzzy set
\[
\widehat{S}(x)=\left\{
\begin{array}{ll}
1,x\in S, \\
0,x\notin S.
\end{array}
\right.
\]
If there is no confusion,
we shall simply write
$S$ to denote $\widehat{S}$.
Let $r\in \mathbb{R}$.
If there is no confusion,
we will use
$\mathbf{r}$ to denote $\mathbf{r}_\mathbf{m}=(r,r,\ldots, r)$ in $\mathbb{R}^m$,
the singleton
 $\{\mathbf{r}_\mathbf{m}\}$, or
the fuzzy set $\widehat{    \{\mathbf{r}_\mathbf{m}\}    }$.

Suppose
$u\in F(\mathbb{R}^m)$, we use $[u]_{\al}$ to denote the $\al$-cut of
$u$, i.e.
\[
[u]_{\al}=\begin{cases}
\{x\in \mathbb{R}^m : u(x)\geq \al \}, & \ \al\in(0,1],
\\
{\rm supp}\, u=\overline{\{x \in \mathbb{R}^m: u(x)>0\}}, & \ \al=0.
\end{cases}
\]
In the sequel
of
this paper, our discussion involves several kinds
of
fuzzy sets such as $F_{USCG}   (\mathbb{R}^m)$, $E^m_{nc}$, $E^m$, $L(E^m)$ which    are listed in the following.
\begin{itemize}
  \item
We say a fuzzy set
  $u\in F_{USCG} (\mathbb{R}^m)$
if and only if
$u\in F(\mathbb{R}^m)$ and
$[u]_\al$ is a compact set in $\mathbb{R}^m$ for each $\al\in(0,1]$.
$F_{USCG} (\mathbb{R}^m)$ is a subset of upper semi-continuous fuzzy sets on $\mathbb{R}^m$.

\item We say a fuzzy set
  $u\in E^m_{nc}$
if and only if
$u\in F(\mathbb{R}^m)$ and
$[u]_\al$ is a nonempty compact convex set in $\mathbb{R}^m$ for each $\al\in(0,1]$.
A fuzzy set in $E^m_{nc}$ is called a (noncompact) fuzzy number.

\item  We say a fuzzy set
  $u\in E^m$
if and only if
$u\in F(\mathbb{R}^m)$ and
$[u]_\al$ is a nonempty compact convex set in $\mathbb{R}^m$ for each $\al\in [0,1]$.
A fuzzy set in $E^m$ is called a (compact) fuzzy number.

  \item
  We say
 a fuzzy set $u \in L(E^m)$
if and only if
$u \in F(\mathbb{R}^m)$ and $[u]_\al$ is a cell, i.e., $[u]_\al= \Pi_{k=1}^m [{u_k}^-(\al), {u_k}^+(\al)]$ for all $\al\in [0,1]$,
where ${u_k}^-(\al), {u_k}^+(\al) \in \mathbb{R}$ and ${u_k}^-(\al) \leq {u_k}^+(\al)$.
A fuzzy set in $L(E^m)$ is called an $m$-cell fuzzy number.
\end{itemize}

The concept of  $m$-cell fuzzy number was introduced by
Wang and Wu \cite{wangx}. It can be checked
that
$L(E^m)   \subsetneq E^m \subsetneq E^m_{nc} \subsetneq  F_{USCG}   (\mathbb{R}^m) $.

Buckley \cite{buckley} has introduced the approximately greater than relation on fuzzy sets in $E^1_{nc}$.

\begin{df} \label{buckley} \cite{buckley}\
  Let $u,v \in E^1_{nc}$, the set of $1$-dimensional noncompact fuzzy numbers and $\theta>0$.
  Define
   $$ G(u\geq v):= \sup_{x\geq y } (   u(x) \wedge   v(y)   ).$$
    It is called
    \begin{itemize}
      \item  $u >_\theta v$, if $G(u \geq v) =1$ and $ G(v \geq u) <  \theta$.
      \item  $u=_\theta v$, if  $G(u \geq v) \geq \theta$ and $ G(v \geq u) \geq \theta$.
      \item  $u \geq_\theta v$, if  $u >_\theta v$ or   $u=_\theta v$.
    \end{itemize}
\end{df}


\begin{re}
Wang (the Corollary after Lemma 1 in \cite{wang}) have pointed out
that, for any
  $u,v\in L(E^1)=E^1$,
 $ u \geq_\theta v  \Longleftrightarrow  G(u \geq v) \geq \theta$.

\end{re}

\section{Extension of $G$}

The concept of $ G(u\geq v)$   can be easily extended.
  We can define
   $$ \bm{G(u \bm{ \succeq } v)}:= \sup_{x \succeq y } (   u(x) \wedge   v(y)   )$$
  where
   $u,v \in F(\mathbb{R}^m)$ and $\succeq$ is a partial order on $\mathbb{R}^m$.

\begin{lm}
\label{lac}
Suppose $u,v \in F(\mathbb{R}^m)$ and $\theta>0$.
Then
\\
(\romannumeral1) \
  $G(u\succeq v)<\theta $ if and only if there exists an $\alpha \in [0, \theta)$ such that $G([u]_\al \succeq [v]_\al) =0$.
  \\
(\romannumeral2) \
  $G(u\succeq v)\geq\theta $ if and only if $G([u]_\al \succeq [v]_\al) =1$ for all $\alpha\in [0, \theta)$.
\end{lm}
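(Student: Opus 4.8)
My plan is to prove part (\romannumeral2) and then obtain part (\romannumeral1) at no extra cost, since the two are logical negations of one another. Indeed, the negation of $G(u\succeq v)\ge\theta$ is $G(u\succeq v)<\theta$; and because $[u]_\al$ and $[v]_\al$ are crisp sets, so that (viewing them as characteristic functions) $G([u]_\al\succeq[v]_\al)$ takes only the values $0$ and $1$, the negation of ``$G([u]_\al\succeq[v]_\al)=1$ for all $\al\in[0,\theta)$'' is precisely ``there exists $\al\in[0,\theta)$ with $G([u]_\al\succeq[v]_\al)=0$''. Thus (\romannumeral1) and (\romannumeral2) are equivalent, each being obtained from the other by negating both sides of the biconditional, and it suffices to establish (\romannumeral2).

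First I would unwind the crisp quantity: $G([u]_\al\succeq[v]_\al)=1$ holds exactly when there exist $x\in[u]_\al$ and $y\in[v]_\al$ with $x\succeq y$, and equals $0$ otherwise. The core of the proof is then a pair of one-directional links at each level $\al$. On one hand, $G(u\succeq v)>\al$ supplies a pair $x\succeq y$ with $u(x)\wedge v(y)>\al$, hence $u(x)>\al$ and $v(y)>\al$, so $x\in[u]_\al$, $y\in[v]_\al$ and $G([u]_\al\succeq[v]_\al)=1$; for $\al=0$ one instead uses $u(x)>0\Rightarrow x\in\{u>0\}\subseteq[u]_0$. On the other hand, a witnessing pair $x\in[u]_\al$, $y\in[v]_\al$ with $x\succeq y$ satisfies $u(x)\wedge v(y)\ge\al$, so the supremum defining $G(u\succeq v)$ is at least $\al$.

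With these links, (\romannumeral2) follows quickly. If $G(u\succeq v)\ge\theta$, then every $\al\in[0,\theta)$ obeys $\al<G(u\succeq v)$, so the first link yields $G([u]_\al\succeq[v]_\al)=1$. Conversely, if $G([u]_\al\succeq[v]_\al)=1$ for all $\al\in[0,\theta)$, the second link gives $G(u\succeq v)\ge\al$ for each such $\al$, and letting $\al\uparrow\theta$ (that is, taking the supremum over $\al\in[0,\theta)$) gives $G(u\succeq v)\ge\theta$.

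I expect the only real subtlety to be that the supremum defining $G(u\succeq v)$ need not be attained, which is exactly why the two links above cannot be merged into a single equivalence ``$G(u\succeq v)\ge\al\Leftrightarrow G([u]_\al\succeq[v]_\al)=1$'' at a fixed $\al$. The half-open threshold $\al\in[0,\theta)$ is what absorbs this gap, so keeping the strict inequality in the quantifier throughout is essential. A secondary point is the endpoint $\al=0$, where $[u]_0$ is the closure of the support rather than a genuine $\al$-cut; this requires the small separate check noted above, but it does not affect the argument, since in the converse direction that level only ever contributes the trivial bound $G(u\succeq v)\ge 0$.
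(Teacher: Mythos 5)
Your proof is correct and takes essentially the same route as the paper: the paper likewise observes that (\romannumeral1) and (\romannumeral2) are negations of one another and then verifies one of them directly from the definition of $G$ as a supremum (the paper argues (\romannumeral1) and deduces (\romannumeral2), whereas you argue (\romannumeral2) and deduce (\romannumeral1) --- a purely cosmetic difference). Your additional care with the $\alpha=0$ level and with the fact that $G$ takes only the values $0$ and $1$ on crisp sets merely fills in details the paper dismisses as easy.
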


\begin{proof}
  (\romannumeral1) is equivalent to (\romannumeral2). The proof of this lemma is easy.
\end{proof}

\begin{itemize}
  \item A partial order $\succeq$ on $\mathbb{R}^m$ is said to be \textbf{preserved by limit operation}, if $x \succeq y$
when $x=\lim_{n\to \infty} x_n$, $y=\lim_{n\to \infty} y_n$
and
$x_n \succeq y_n$, $n=1,2,\ldots$.
\end{itemize}

\begin{lm}\label{pln} Suppose that $u,v \in   F_{USCG}(\mathbb{R}^m)$, $\theta>0$, and the partial order $\succeq$ is preserved by limit operation. Then
  $G(u\succeq v)\geq \theta $ if and only if $G([u]_\theta \succeq [v]_\theta) =1$.
\end{lm}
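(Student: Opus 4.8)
The plan is to lean on Lemma~\ref{lac}, which already records that $G(u\succeq v)\geq\theta$ is equivalent to $G([u]_\alpha\succeq[v]_\alpha)=1$ for every $\alpha\in[0,\theta)$; the whole content of this lemma is then to upgrade the clause ``for every $\alpha<\theta$'' to the single endpoint value $\alpha=\theta$. One direction needs no hypotheses beyond the definition of $G$: if $G([u]_\theta\succeq[v]_\theta)=1$, there are points $x\succeq y$ with $u(x)\geq\theta$ and $v(y)\geq\theta$, whence $G(u\succeq v)=\sup_{x'\succeq y'}(u(x')\wedge v(y'))\geq u(x)\wedge v(y)\geq\theta$. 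So the real work lies in the converse.

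For the converse, assume $G(u\succeq v)\geq\theta$. By Lemma~\ref{lac}(\romannumeral2), for each $\alpha\in[0,\theta)$ we may select $x_\alpha\in[u]_\alpha$ and $y_\alpha\in[v]_\alpha$ with $x_\alpha\succeq y_\alpha$. I would fix an increasing sequence $\alpha_n\uparrow\theta$ with $0<\alpha_1$ and write $x_n:=x_{\alpha_n}$, $y_n:=y_{\alpha_n}$, so that $x_n\succeq y_n$, $u(x_n)\geq\alpha_n$, and $v(y_n)\geq\alpha_n$. Since the cuts are nested, $x_n\in[u]_{\alpha_1}$ and $y_n\in[v]_{\alpha_1}$ for all $n$; because $u,v\in F_{USCG}(\mathbb{R}^m)$ and $\alpha_1>0$, the sets $[u]_{\alpha_1}$ and $[v]_{\alpha_1}$ are compact. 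Passing to a subsequence I may therefore assume $x_n\to x$ and $y_n\to y$. As $x_n\succeq y_n$ for all $n$ and $\succeq$ is preserved by limit operation, the limits satisfy $x\succeq y$.

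It remains to place $x$ and $y$ inside the $\theta$-cuts. Fix any $\beta\in(0,\theta)$. For all large $n$ one has $\alpha_n>\beta$, hence $x_n\in[u]_{\alpha_n}\subseteq[u]_\beta$; since $[u]_\beta$ is compact, hence closed, the limit $x$ lies in $[u]_\beta$, i.e. $u(x)\geq\beta$. Letting $\beta\uparrow\theta$ gives $u(x)\geq\theta$, so $x\in[u]_\theta$, and the same argument yields $y\in[v]_\theta$. Together with $x\succeq y$ this exhibits a witnessing pair, so $G([u]_\theta\succeq[v]_\theta)=1$, which closes the converse.

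The step I expect to be delicate is exactly this passage from the open range $[0,\theta)$ to the endpoint $\theta$: the supremum defining $G$ need not be attained, so one cannot merely evaluate at $\alpha=\theta$. The two standing hypotheses are precisely what rescue the argument — compactness of the positive cuts (supplied by $F_{USCG}$) furnishes the convergent subsequences and keeps the limit points inside each cut, while the limit-preservation property of $\succeq$ ensures the order relation survives the passage to the limit. I would expect the equivalence to break down if either hypothesis were dropped.
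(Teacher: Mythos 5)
Your proof is correct and takes essentially the same route as the paper's own argument (the printed proof says only that it is easy, but the source's suppressed proof proceeds identically): pick witnesses at levels approaching $\theta$ from below, use compactness of the positive cuts of $u,v\in F_{USCG}(\mathbb{R}^m)$ to extract convergent subsequences $x_{n}\to x$, $y_{n}\to y$, invoke preservation of $\succeq$ under limits to get $x\succeq y$, and use closedness of cuts to conclude $x\in[u]_\theta$, $y\in[v]_\theta$. Your write-up is in fact slightly more careful than the paper's, since you make explicit that all witnesses lie in the single compact cut $[u]_{\alpha_1}$ and spell out the $\beta\uparrow\theta$ argument placing the limits in the $\theta$-cuts.
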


\begin{proof}
  The proof is easy.
\end{proof}

\section{ $\succeq_\theta$ --- a generation of $\geq_\theta$ }

\begin{df} \label{newaeu}
  Suppose that $u,v \in F(\mathbb{R}^m)$, $\theta>0$ and $\succeq$ is a partial order on $\mathbb{R}^m$. We say
\bm{$u \succeq_\theta v$} if and only if $G(u\succeq v)\geq \theta $.
\end{df}

Let $\succeq^a$ be a partial order on $\mathbb{R}^m$
     and let $\succeq^b$ be a partial order on $\mathbb{R}^n$.
We say $f: (\mathbb{R}^m, \succeq^a)  \to (\mathbb{R}^n, \succeq^b)$ is nondecreasing if $x \succeq^a y $ in $\mathbb{R}^m$
implies $f(x) \succeq^b f(y) $ in $\mathbb{R}^n$.

Given $f: \mathbb{R}^m  \to \mathbb{R}^n$ and $u\in F(\mathbb{R}^m)$.
Then
 $f(u)$ is defined via the Zadeh's extension principle.

\begin{tm} \label{mon}
   Suppose that $u,v \in F(\mathbb{R}^m)$, $\theta>0$ and that $f: (\mathbb{R}^m, \succeq^a)  \to (\mathbb{R}^n, \succeq^b)$ is nondecreasing.
  Then $u \succeq^a_\theta v$ implies $f(u)\succeq^b_\theta f(v)$.
\end{tm}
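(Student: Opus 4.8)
The plan is to argue directly from the definition of $G$ together with the Zadeh extension principle, transporting a near-optimal pair of witnesses through $f$. First I would unfold the hypothesis: by Definition \ref{newaeu}, $u \succeq^a_\theta v$ means
$$G(u \succeq^a v) = \sup_{x \succeq^a y}\left(u(x) \wedge v(y)\right) \geq \theta.$$
Fix an arbitrary $\var > 0$. By the definition of supremum, there exist points $x, y \in \mathbb{R}^m$ with $x \succeq^a y$ and $u(x) \wedge v(y) > \theta - \var$.

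Next I would push these witnesses forward. Set $z := f(x)$ and $w := f(y)$. Since $x \succeq^a y$ and $f$ is nondecreasing, monotonicity gives $z = f(x) \succeq^b f(y) = w$. The key property of the Zadeh extension is that $f(u)(f(x)) = \sup_{x' : f(x') = f(x)} u(x') \geq u(x)$, since $x$ itself lies in the fiber over $f(x)$; likewise $f(v)(f(y)) \geq v(y)$. Combining these yields $f(u)(z) \wedge f(v)(w) \geq u(x) \wedge v(y) > \theta - \var$.

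Finally, since $z \succeq^b w$, the pair $(z,w)$ is admissible in the supremum defining $G(f(u) \succeq^b f(v))$, so
$$G(f(u) \succeq^b f(v)) = \sup_{z' \succeq^b w'}\left(f(u)(z') \wedge f(v)(w')\right) \geq f(u)(z) \wedge f(v)(w) > \theta - \var.$$
As $\var > 0$ was arbitrary, letting $\var \to 0$ gives $G(f(u) \succeq^b f(v)) \geq \theta$, which is precisely $f(u) \succeq^b_\theta f(v)$ by Definition \ref{newaeu}.

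The whole argument is essentially one sup-manipulation, so I anticipate no serious obstacle. The only point deserving explicit care is the inequality $f(u)(f(x)) \geq u(x)$: it is immediate from the extension formula because the fiber $\{x' : f(x') = f(x)\}$ contains $x$, but it is the single place where the extension principle is genuinely invoked, and it is what links the monotonicity of $f$ to the behavior of $G$. Everything else is routine bookkeeping with the approximating witnesses.
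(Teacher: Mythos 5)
Your proof is correct: the witnesses exist by the definition of the supremum, monotonicity of $f$ makes the pushed-forward pair admissible, and the fiber inequality $f(u)(f(x)) \geq u(x)$ is exactly the right invocation of the Zadeh extension principle, after which letting $\var \to 0$ closes the argument. However, your route is genuinely different from the paper's. The paper factors the argument through Lemma \ref{lac}: from $u \succeq^a_\theta v$ it extracts, for every $\xi \in [0,\theta)$, a crisp witness pair $x \in [u]_\xi$, $y \in [v]_\xi$ with $x \succeq^a y$, pushes it forward through $f$ (the same fiber inequality, now used in the form $f([u]_\xi) \subseteq [f(u)]_\xi$), and then applies Lemma \ref{lac} in the reverse direction to reassemble the conclusion $G(f(u) \succeq^b f(v)) \geq \theta$. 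Your direct $\var$-argument buys self-containment: it never invokes the cut characterization, and it quietly sidesteps a small wrinkle in the cut-based route at $\xi = 0$, where $[u]_0$ is defined as a closure of the support, so $f([u]_0) \subseteq [f(u)]_0$ can fail for discontinuous $f$ and the level-$0$ witnesses must instead be borrowed from positive levels. What the paper's version buys is brevity (two lines, given the lemma) and uniformity of method with the rest of the paper, where results such as Theorems \ref{uscgce} and \ref{chm} are likewise proved by reducing inequalities about $G$ to statements about level cuts.
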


\begin{proof}
Assume $u \succeq^a_\theta v$. From Lemma \ref{lac}, we know
that $G([u]_\xi \succeq^a [v]_\xi) =1$
for all $\xi\in [0, \theta)$, and
hence
$G([f(u)]_\xi \succeq^b [f(v)]_\xi) =1$
for all $\xi\in [0, \theta)$.
Thus
$f(u)\succeq^b_\theta f(v)$.
\end{proof}

Suppose that $x=(x_1,\ldots,x_m),y=(y_1,\ldots, y_m) \in \mathbb{R}^m$. We call \bm{$x \succeq^1_m y$} if $x_i \geq y_i$ for $i=1,2,\ldots, m$.
If there is no confusion,
we
will write \bm{$ \succeq^1 $} to denote $\succeq^1_m$ for simplicity.
It can be checked that $\succeq^1 $ is a partial order on $\mathbb{R}^m$, which is
preserved by limit operation.

\begin{re}
  We can construct
  $f: (\mathbb{R}^m, \succeq^1)  \to (\mathbb{R}^n, \succeq^1)$, which is nondecreasing
  and
  $f(u) \in L(E^n)$ when $u \in L(E^m)$. From Theorem \ref{mon}, it is clear
  that
 $u \succeq^1_\theta v$ implies $f(u)\succeq^1_\theta f(v)$ for  $u,v \in L(E^m)$ and $\theta>0$.

 Define $f: (\mathbb{R}^m, \succeq^1)  \to (\mathbb{R}^m, \succeq^1)$
  as follows:
  $f(x_1,\ldots, x_m) = (f_1(x_1),\ldots, f_m(x_m))$, where $f_i:\mathbb{R} \to \mathbb{R} $, $i=1,2,\ldots,m$.
 Then
   $f$ is nondecreasing iff  $f_i$ is nondecreasing, i.e. $f_i(x) \geq f_i(y)$ when $x \geq y$, $i=1,2,\ldots,m$,
   and
  $f$
  is continuous iff $f_i$ is continuous, $i=1,2,\ldots,m$.

  Suppose that $f$ defined above
  is    continuous.
  Then      $f(u) \in L(E^m)$ when $u\in L(E^m)$,
  and
for any $\theta \in [0,1]$,
      \begin{align*}
      [f(u)]_\theta & = f([u]_\theta)\\
      &        =f(\Pi_{i=1}^m    [u_i^-(\theta), u_i^+(\theta)]) \\
      & =  \Pi_{i=1}^m   f_i[ u_i^-(\theta) ,    u_i^+(\theta)  ].
    \end{align*}
Moreover, if $f$ is nondecreasing and continuous,
then for any $\theta \in [0,1]$,
  $$ [f(u)]_\theta= \Pi_{i=1}^m   [ f_i( u_i^-(\theta) ),   f_i(   u_i^+(\theta) ) ].$$

In this way, we can  construct $f: (\mathbb{R}^m, \succeq^1)  \to (\mathbb{R}^n, \succeq^1)$, $n\leq m$, which is   nondecreasing,   continuous and
$f(u) \in L(E^n)$ when $u\in L(E^m)$.

For example, let
$f(x_1, x_2, x_3, x_4)=(g(x_4), h(x_2, x_1), k(x_3))$, where $g:\mathbb{R} \to \mathbb{R}$,
$h:(\mathbb{R}^2, \succeq^1) \to \mathbb{R}$ and $k:\mathbb{R} \to \mathbb{R}$.
Then
   $f:  (\mathbb{R}^4, \succeq^1) \to (\mathbb{R}^3, \succeq^1)$ is nondecreasing iff  $g,h,k$ is nondecreasing,
   and
  $f$
  is continuous iff $g,h,k$  is continuous.
If
$f$ is continuous, then $f(u) \in L(E^3)$ when $u\in L(E^4)$,
and for any $\theta \in [0,1]$,
\begin{align*}
&[f(u)]_\theta = f([u]_\theta)\\
&= g[ u_4^-(\theta),     u_4^+(\theta)]\times h([u_2^-(\theta), u_2^+(\theta)] \times [u_1^-(\theta), u_1^+(\theta)])
\times k[u_3^-(\theta),  u_3^+(\theta) ].
\end{align*}
If
$f$ is nondecreasing and  continuous,
then for any $\theta \in [0,1]$,
$$[f(u)]_\theta =  [ g( u_4^-(\theta) ),   g(   u_4^+(\theta) ) ]
       \times [h(u_2^-(\theta), u_1^-(\theta)),  h(u_2^+(\theta), u_1^+(\theta))  ]
       \times [k( u_3^-(\theta) ), k( u_3^+(\theta) )].$$

\end{re}

\begin{tm} \label{uscgce}
   Suppose that $u,v \in   F_{USCG}(\mathbb{R}^m)$, $\al>0$. Then
   $u \succeq^1_\alpha v$
if and only if $G([u]_\al \succeq^1 [v]_\al) =1$.
\end{tm}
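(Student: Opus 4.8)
The plan is to read this statement as the special case of Lemma~\ref{pln} in which $\succeq$ is the componentwise order $\succeq^1$ and $\theta=\alpha$. First I would unwind Definition~\ref{newaeu}: $u\succeq^1_\alpha v$ means exactly $G(u\succeq^1 v)\geq\alpha$, so the asserted equivalence is precisely $G(u\succeq^1 v)\geq\alpha\iff G([u]_\alpha\succeq^1[v]_\alpha)=1$. I would then recall the fact already recorded after Theorem~\ref{mon}, namely that $\succeq^1$ is a partial order on $\mathbb{R}^m$ which is preserved by limit operation. Since moreover $u,v\in F_{USCG}(\mathbb{R}^m)$ and $\alpha>0$, every hypothesis of Lemma~\ref{pln} is in force, and the theorem drops out at once. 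In this reading there is essentially no obstacle; the content lies entirely in Lemma~\ref{pln}.

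If one instead wants a self-contained argument (a proof that re-derives Lemma~\ref{pln} in this concrete setting), the easy direction is $(\Leftarrow)$. Assuming $G([u]_\alpha\succeq^1[v]_\alpha)=1$ and noting that $[u]_\alpha,[v]_\alpha$ are ordinary sets, the supremum of the characteristic values can equal $1$ only if there are points $x\in[u]_\alpha$ and $y\in[v]_\alpha$ with $x\succeq^1 y$. For such a pair $u(x)\wedge v(y)\geq\alpha$, whence $G(u\succeq^1 v)\geq\alpha$, i.e. $u\succeq^1_\alpha v$.

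For the forward direction $(\Rightarrow)$ I would start from $G(u\succeq^1 v)\geq\alpha$ and choose sequences with $x_n\succeq^1 y_n$ and $u(x_n)\wedge v(y_n)>\alpha-1/n$. For large $n$ these values exceed $\alpha/2>0$, so $x_n\in[u]_{\alpha/2}$ and $y_n\in[v]_{\alpha/2}$; since $\alpha/2>0$ these cuts are compact, and I would extract convergent subsequences $x_{n_k}\to x$, $y_{n_k}\to y$. Preservation of $\succeq^1$ under limits gives $x\succeq^1 y$. The main obstacle is promoting membership back up to level $\alpha$: I would use that compactness of all positive cuts forces $u$ and $v$ to be upper semicontinuous, so that $u(x)\geq\limsup_k u(x_{n_k})\geq\alpha$ and likewise $v(y)\geq\alpha$. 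Hence $x\in[u]_\alpha$, $y\in[v]_\alpha$ and $x\succeq^1 y$, giving $G([u]_\alpha\succeq^1[v]_\alpha)=1$ and completing the proof.
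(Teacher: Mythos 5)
Your first paragraph is exactly the paper's own proof: the paper disposes of Theorem \ref{uscgce} with the single remark that it ``follows immediately from Lemma \ref{pln},'' using precisely the unwinding of Definition \ref{newaeu} and the fact that $\succeq^1$ is preserved by limit operation. Your supplementary self-contained argument (compactness of the cuts $[u]_{\alpha/2}$, $[v]_{\alpha/2}$, subsequence extraction, and upper semicontinuity to lift $x$, $y$ back to the level-$\alpha$ cuts) is also correct, and in fact fills in the details of Lemma \ref{pln} itself, whose proof the paper declares ``easy'' and omits.
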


\begin{proof}
  The desired result follows immediately from Lemma \ref{pln}.
\end{proof}

\begin{tm} \label{chm}
   Suppose that $u,v \in   L(E^m)$, $\al>0$. Then the following statements are equivalent:
   \\
   (\romannumeral1) \ $u \succeq^1_\alpha v$.
   \\
   (\romannumeral2) \   $G(u \succeq^1 v) \geq \al$.
   \\
   (\romannumeral3) \   $G([u]_\al \succeq^1 [v]_\al) =1$.
   \\
      (\romannumeral4) \  ${u_k}^+(\al) \geq {v_k}^-(\al) $ for $k=1,2,\ldots,m$.
\end{tm}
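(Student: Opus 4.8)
The plan is to establish the cycle of equivalences by first clearing away the implications that are immediate consequences of results already proved, and then focusing all the real effort on the one genuinely new equivalence, namely (iii) $\Leftrightarrow$ (iv), which is where the cell structure of $L(E^m)$ is actually used.

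First, (i) $\Leftrightarrow$ (ii) is nothing more than Definition \ref{newaeu}, since by definition $u \succeq^1_\alpha v$ means exactly $G(u \succeq^1 v) \geq \alpha$. Next, because $L(E^m) \subsetneq F_{USCG}(\mathbb{R}^m)$, Theorem \ref{uscgce} applies verbatim to $u,v \in L(E^m)$ and yields (i) $\Leftrightarrow$ (iii). Hence it suffices to prove (iii) $\Leftrightarrow$ (iv), and everything else follows.

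For the core step I would first observe that $[u]_\alpha$ and $[v]_\alpha$ are crisp sets, so their characteristic functions take only the values $0$ and $1$; consequently $G([u]_\alpha \succeq^1 [v]_\alpha)$, being a supremum of minima of such values, is itself either $0$ or $1$, and it equals $1$ if and only if there exist points $x \in [u]_\alpha$ and $y \in [v]_\alpha$ with $x \succeq^1 y$. Writing $[u]_\alpha = \Pi_{k=1}^m [u_k^-(\alpha), u_k^+(\alpha)]$ and $[v]_\alpha = \Pi_{k=1}^m [v_k^-(\alpha), v_k^+(\alpha)]$, the requirement $x \succeq^1 y$ becomes the coordinatewise condition $x_k \geq y_k$ for every $k$. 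The key point is then to exploit the product structure: since both cuts are Cartesian products of intervals, the coordinates decouple, so such a pair $x,y$ exists if and only if, for each $k$ separately, one can choose $x_k \in [u_k^-(\alpha), u_k^+(\alpha)]$ and $y_k \in [v_k^-(\alpha), v_k^+(\alpha)]$ with $x_k \geq y_k$. For a single coordinate this is possible exactly when the largest admissible value of $x_k$, namely $u_k^+(\alpha)$, is at least the smallest admissible value of $y_k$, namely $v_k^-(\alpha)$; that is, $u_k^+(\alpha) \geq v_k^-(\alpha)$. Combining over all $k$ gives (iv).

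I do not expect a serious obstacle here. The only step needing a little care is the decoupling: one must verify that coordinatewise choices can be assembled into a single pair of points lying in the respective cells, and that the coordinatewise obstruction $u_k^+(\alpha) < v_k^-(\alpha)$ really does rule out every admissible choice in that coordinate. Both are immediate precisely because the $\alpha$-cuts are Cartesian products of intervals, so there is no interaction between coordinates to complicate the argument.
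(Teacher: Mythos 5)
Your proposal is correct and follows essentially the same route as the paper: the equivalence of (\romannumeral1), (\romannumeral2), (\romannumeral3) is dispatched via Definition \ref{newaeu} and Theorem \ref{uscgce}, and the core equivalence (\romannumeral3) $\Leftrightarrow$ (\romannumeral4) is proved using the cell structure, with the extreme points $u_k^+(\al)$ and $v_k^-(\al)$ serving as the witnesses --- exactly the pair $x=({u_1}^+(\al),\ldots,{u_m}^+(\al))$, $y=({v_1}^-(\al),\ldots,{v_m}^-(\al))$ used in the paper. Your coordinatewise ``decoupling'' phrasing is just a slightly more explicit packaging of the same argument.
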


\begin{proof} The equivalence of statements  (\romannumeral1), (\romannumeral2) and (\romannumeral3)
follow
  from Definition \ref{newaeu} and Theorem \ref{uscgce}.

  Assume statement  (\romannumeral3) holds. Then there exists $x \in [u]_\al $ and $y \in [v]_\al$
  such that
  $x\succeq^1 y$. This follows immediately that ${u_k}^+(\al) \geq {v_k}^-(\al) $ for $k=1,2,\ldots,m$.
  Thus (\romannumeral3) implies (\romannumeral4).

  It remains to prove
  that
  (\romannumeral4) implies (\romannumeral3).
  Accordingly,
  let   ${u_k}^+(\al) \geq {v_k}^-(\al) $ for $k=1,2,\ldots,m$.
  Then
  $x :=({u_1}^+(\al),\ldots, {u_k}^+(\al),\ldots, {u_m}^+(\al))  \succeq^1 y:= ({v_1}^-(\al),\ldots, {v_k}^-(\al),\ldots, {v_m}^-(\al))$;
  whence
     $G([u]_\al \succeq^1 [v]_\al) =1$, and therefore (\romannumeral4) implies (\romannumeral3).
\end{proof}

\begin{re}

 $u \succ^1_\al v$ can be defined as $u \succeq^1_\al  v$ but $v \not\succeq^1_\al u$.
In this sense, for $u,v \in L(E^m)$,
$u  \succ^1_\al  v$
iff
${u_k}^+(\al) \geq {v_k}^-(\al) $ for $k=1,2,\ldots,m$
and
${u_j}^-(\al) > {v_j}^+(\al)$ for some $j$ with $1\leq j \leq m$.

Sometimes, for $u,v \in L(E^m)$, $u  \succ^1_\al  v$ is defined
as
${u_k}^+(\al) \geq {v_k}^-(\al) $ for $k=1,2,\ldots,m$
and
${u_j}^+(\al) > {v_j}^-(\al)$ for some $j$ with $1\leq j \leq m$.

 Suppose that $m \geq 2$.
It is easy to check
that, for any definition of ``$\succ^1_\al$'' on $L(E^m)$ mentioned above,
there
exists  $u$, $v$, $u_n$, $v_n$, $n=1,2,\ldots$, in   $L(E^m)$
such that
 $\{u_n\}$ level converges to $u$,  $\{v_n\}$ level converges to $v$, 
$u  \succ^1_\al v$,
but
$u_n \succeq^1_\al v_n$ does not hold for any $n=1,2,\ldots$.

\end{re}

$u\in F(\mathbb{R}^m)$ is said to
be
\textbf{positive} if for any $x=(x_i)$ with $u(x)>0$, it holds that $x_i >0$, $i=1,2,\ldots, m$.
$u\in F(\mathbb{R}^m)$ is said to
be
\textbf{negative} if $-u$ is positive.

$u\in F(\mathbb{R}^m)$ is said to
be
\textbf{strong positive} if for any $x=(x_i)\in [u]_0$, it holds that $x_i >0$, $i=1,2,\ldots, m$.
$u\in F(\mathbb{R}^m)$ is said to
be
\textbf{strong negative} if $-u$ is strong positive.

The four arithmetic operators $+,-,\times,\div$ on $F(\mathbb{R}^m)$ are defined as usual, which are based on the Zadeh's extension principle.
See \cite{wangx}.

We can consider properties of $\succeq^1_\al$. The following theorem gives one of them.

\begin{tm}
Suppose that  $u,v \in   F(\mathbb{R}^m)$, $\al>0$.
If $u \succeq^1_\al v$ and $v$ is positive,
then $u/v  \succeq^1_\al \mathbf{1}$.
\end{tm}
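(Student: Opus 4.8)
The plan is to unwind both sides to the explicit suprema that define $G$, and then use the positivity of $v$ to convert the ratio condition $z\succeq^1\mathbf{1}$ into the order condition $x\succeq^1 y$. By Definition \ref{newaeu}, the hypothesis $u\succeq^1_\al v$ is exactly $G(u\succeq^1 v)=\sup_{x\succeq^1 y}(u(x)\wedge v(y))\geq\al$, and the conclusion $u/v\succeq^1_\al\mathbf{1}$ is exactly $G(u/v\succeq^1\mathbf{1})\geq\al$. Since $\mathbf{1}$ is the characteristic function of the singleton $\{(1,\ldots,1)\}$, only the point $w=(1,\ldots,1)$ contributes to the supremum defining $G(u/v\succeq^1\mathbf{1})$, so this quantity equals $\sup_{z\succeq^1\mathbf{1}}(u/v)(z)$. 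Hence it suffices to prove $\sup_{z\succeq^1\mathbf{1}}(u/v)(z)\geq\al$.

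Next I would expand $u/v$ through Zadeh's extension principle. Writing the division coordinatewise, $(u/v)(z)=\sup\{u(x)\wedge v(y): z=x/y\}$, where $z=x/y$ means $z_i=x_i/y_i$ for every $i$. This is legitimate precisely because $v$ is positive: any $y$ with $v(y)>0$ has all coordinates $y_i>0$, so the quotient is defined on every pair that can contribute a positive value. Substituting this into the previous supremum and merging the two suprema, I obtain $G(u/v\succeq^1\mathbf{1})=\sup\{u(x)\wedge v(y): x/y\succeq^1\mathbf{1},\ v(y)>0\}$, where restricting to $v(y)>0$ loses nothing, since any pair with $v(y)=0$ contributes the value $0$.

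The decisive step is the equivalence that positivity provides. For a pair with $v(y)>0$ we have $y_i>0$ for all $i$, so $x/y\succeq^1\mathbf{1}$ means $x_i/y_i\geq 1$ for all $i$, which is equivalent to $x_i\geq y_i$ for all $i$, i.e.\ to $x\succeq^1 y$. Thus the constraint set in the last supremum is exactly $\{(x,y): x\succeq^1 y,\ v(y)>0\}$, and reinstating the pairs with $v(y)=0$ only reintroduces terms equal to $0$, which cannot lower a supremum already known to be at least $\al>0$. This yields $G(u/v\succeq^1\mathbf{1})=\sup_{x\succeq^1 y}(u(x)\wedge v(y))=G(u\succeq^1 v)\geq\al$, and therefore $u/v\succeq^1_\al\mathbf{1}$.

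I expect the only genuine obstacle to be the bookkeeping around the division operation rather than any deep difficulty: one must state the Zadeh extension of coordinatewise division carefully, check that positivity of $v$ makes the quotient well defined on every pair relevant to the supremum, and confirm that inserting or deleting the pairs with $v(y)=0$ leaves a positive supremum unchanged. Once these points are settled, the algebraic equivalence $x_i/y_i\geq 1\iff x_i\geq y_i$, valid only because $y_i>0$, does all the work, and it is exactly here that the positivity of $v$ is indispensable.
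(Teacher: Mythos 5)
Your proof is correct and is in substance the same argument as the paper's: both hinge on the observation that positivity of $v$ forces every pair $(x,y)$ contributing a positive value to satisfy $y_i>0$, so that $x\succeq^1 y$ becomes $x/y\succeq^1\mathbf{1}$, and the Zadeh extension then carries the value $u(x)\wedge v(y)$ over to $(u/v)(x/y)$. The only difference is bookkeeping: the paper extracts witness sequences $x_n\in[u]_{\frac{n-1}{n}\alpha}$, $y_n\in[v]_{\frac{n-1}{n}\alpha}$ with $x_n\succeq^1 y_n$ from the level-cut characterization of $G$ and lets the levels tend to $\alpha$, whereas you merge the suprema directly, which in fact yields the slightly stronger conclusion $G(u/v\succeq^1\mathbf{1})=G(u\succeq^1 v)$.
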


\begin{proof}
  Assume $u \succeq^1_\al v$. Then from Theorem \ref{chm},
  we
  know that there exists
  $\{x_n\}$, $\{y_n\}$
  such that
   $x_n\in  [u]_{\frac{n-1}{n}\al}$, $y_n \in [v]_{\frac{n-1}{n}\al}$
and
  $x_n \succeq^1 y_n$. Since $v$ is positive, then $x_n/y_n \succeq^1 \mathbf{1}$.
Thus
   $u/v  \succeq^1_\al \mathbf{1}$.
\end{proof}

\begin{re}
If $u,v \in L(E^m)$ and $v$ is strong positive or   strong negative, then $u/v$ is in $L(E^m)$,
and
for each $\theta \in [0,1]$,
$$\left[  \frac{u}{v} \right]_\theta = \frac  {[u]_\theta}   { [v]_\theta}
=  \frac{ \prod_{i=1}^m  [u_i^-(\theta),  u_i^+(\theta)]   }   {  \prod_{i=1}^m  [v_i^-(\theta),  v_i^+(\theta)]  }
=   \prod_{i=1}^m     \frac{  [u_i^-(\theta),  u_i^+(\theta)]  }   { [v_i^-(\theta),  v_i^+(\theta)]}.    $$
  \end{re}

\section{ $\succeq'_\theta$ --- another generation of $\geq_\theta$}

The following $\succeq'_\theta$
is another
generation
of  $\geq_\theta$
given in Definition \ref{buckley}.

\begin{df} \label{buckleyg}\
Suppose that $u,v \in \bm{F'(\mathbb{R}^m)}:= \{w: w\in F(\mathbb{R}^m) \ \mbox{and} \ [w]_1 \not= \emptyset\}$, $\succeq$ is a partial order on $\mathbb{R}^m$ and $\theta>0$.
 It is said
 that
    \begin{itemize}
      \item  $u \succ'_\theta   v$, if $G(u \succeq v) =1$ and $ G(v \succeq u) <  \theta$.
      \item  $u='_\theta v$, if  $G(u \succeq v) \geq \theta$ and $ G(v \succeq u) \geq \theta$.
      \item  $u \succeq'_\theta v$, if  $u \succ'_\theta v$ or   $u='_\theta v$.
    \end{itemize}
\end{df}

Compare $\succeq'_\theta$ given in Definition \ref{buckleyg}
and
$\succeq_\theta$
in
Definition \ref{newaeu}.

\begin{tm} \label{prm}
  Suppose that $u,v \in F'(\mathbb{R}^m)$, $\succeq$ is a partial order on $\mathbb{R}^m$ and $\theta>0$.
  Then
  $u \succeq'_\theta v$ implies
   $u \succeq_\theta v$.
\end{tm}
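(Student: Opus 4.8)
The plan is to reduce the entire statement to the single inequality $G(u \succeq v) \geq \theta$. By Definition \ref{newaeu}, the conclusion $u \succeq_\theta v$ is \emph{by definition} equivalent to $G(u \succeq v) \geq \theta$, so it suffices to extract this one inequality from the hypothesis $u \succeq'_\theta v$.

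First I would unfold Definition \ref{buckleyg}: the hypothesis $u \succeq'_\theta v$ means that either $u \succ'_\theta v$ or $u ='_\theta v$, and I would treat these two cases separately. In the case $u ='_\theta v$ there is essentially nothing to do, since that relation is defined precisely by the pair of conditions $G(u \succeq v) \geq \theta$ and $G(v \succeq u) \geq \theta$; the first of these is exactly the inequality we want.

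In the remaining case $u \succ'_\theta v$, the defining conditions are $G(u \succeq v) = 1$ and $G(v \succeq u) < \theta$. Here I would use only the first one: since $u(x) \wedge v(y) \in [0,1]$ for all $x,y$, the quantity $G(u \succeq v)$ never exceeds $1$, so the admissible thresholds lie in $(0,1]$ and hence $\theta \leq 1$. Consequently $G(u \succeq v) = 1 \geq \theta$, which is once more the desired inequality.

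Combining the two cases, $G(u \succeq v) \geq \theta$ holds whenever $u \succeq'_\theta v$, and by Definition \ref{newaeu} this is exactly $u \succeq_\theta v$. I do not expect any genuine obstacle here: the only point deserving a word of care is the subcase $u \succ'_\theta v$, where one must note that the threshold $\theta$ lies in $(0,1]$ so that the value $G(u \succeq v) = 1$ dominates it; the auxiliary condition $G(v \succeq u) < \theta$ appearing in both Buckley-type definitions plays no role in this implication and is simply discarded.
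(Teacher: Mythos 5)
Your proof is correct and is essentially the paper's own argument made explicit: the paper's proof consists of the single remark that the result ``follows immediately from Definitions \ref{newaeu} and \ref{buckleyg}'', and your two-case unfolding (the $='_\theta$ case giving $G(u\succeq v)\geq\theta$ directly, the $\succ'_\theta$ case giving $G(u\succeq v)=1\geq\theta$) is precisely that omitted routine verification. The one substantive point you raise --- that one must take $\theta\leq 1$ so that $G(u\succeq v)=1$ dominates $\theta$ in the $\succ'_\theta$ case --- is well spotted: the paper only states $\theta>0$, and without the implicit convention $\theta\in(0,1]$ the implication would actually fail there (e.g.\ for crisp singletons with $G(u\succeq v)=1<\theta$), so your explicit appeal to that convention is a genuine, if small, improvement in rigor over the paper's terse proof.
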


\begin{proof}
  The desire result follows immediately from Definitions \ref{newaeu} and \ref{buckleyg}.
\end{proof}

\begin{lm}\label{lrn}
  Suppose that $u,v \in F'(\mathbb{R}^1)$ and $\theta>0$.
Then
$ G(v \succeq^1 u) <  \theta$
implies
  $G(u \succeq^1 v) =1$.
\end{lm}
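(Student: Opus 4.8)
The plan is to exploit the fact that on $\mathbb{R}^1$ the order $\succeq^1$ is the usual \emph{total} order on $\mathbb{R}$; this is the feature that drives the implication and that has no analogue for a general partial order or for $m\geq 2$. First I would use membership in $F'(\mathbb{R}^1)$: since $[u]_1\neq\emptyset$ and $[v]_1\neq\emptyset$, I may choose points $a\in[u]_1$ and $b\in[v]_1$, so that $u(a)=v(b)=1$ (membership values never exceed $1$, so $u(a)\geq 1$ forces $u(a)=1$, and likewise for $v$).

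Next, because $\mathbb{R}$ is totally ordered, exactly one of $a\geq b$ or $b>a$ holds, and I would split into these two cases. If $a\geq b$, then inserting $x=a$ and $y=b$ into the supremum $G(u\succeq^1 v)=\sup_{x\succeq^1 y}(u(x)\wedge v(y))$ gives $G(u\succeq^1 v)\geq u(a)\wedge v(b)=1$, hence $G(u\succeq^1 v)=1$, which is exactly the desired conclusion. If instead $b>a$, i.e.\ $b\succeq^1 a$, then inserting $x=b$, $y=a$ into $G(v\succeq^1 u)$ yields $G(v\succeq^1 u)\geq v(b)\wedge u(a)=1$; but the hypothesis $G(v\succeq^1 u)<\theta\leq 1$ makes $G(v\succeq^1 u)=1$ impossible, so this second case cannot occur under the stated assumption. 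Only the first case survives, so $G(u\succeq^1 v)=1$, as required.

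The argument is short and involves essentially no computation, so the obstacle is conceptual rather than technical: it is recognizing that totality of $\succeq^1$ on $\mathbb{R}^1$ is precisely what forces at least one of the two comparison scores between the level-$1$ points $a$ and $b$ to equal $1$, after which the case split closes the proof at once. The two points requiring mild care are that one may upgrade $u(a)\geq 1$ to $u(a)=1$ on the level-$1$ cut, and that the implication genuinely uses $\theta\leq 1$; for $\theta>1$ the statement is false (for instance $u=\widehat{\{0\}}$ and $v=\widehat{\{1\}}$ give $G(v\succeq^1 u)=1$ but $G(u\succeq^1 v)=0$), so the lemma is to be read with $\theta\in(0,1]$, consistent with $\theta$ playing the role of a threshold in the range of $G$.
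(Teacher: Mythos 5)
Your proof is correct, and it takes a more elementary, self-contained route than the paper. The paper disposes of Lemma \ref{lrn} by citing Lemma \ref{lac}: from $G(v \succeq^1 u) < \theta$ one gets an $\alpha \in [0,\theta)$ with $G([v]_\alpha \succeq^1 [u]_\alpha) = 0$, then totality of $\geq$ on $\mathbb{R}$ forces every point of $[u]_\alpha$ to strictly exceed every point of $[v]_\alpha$, and feeding the nonempty $1$-cuts (which sit inside these $\alpha$-cuts) back into the supremum gives $G(u \succeq^1 v) = 1$. Your argument skips the cut-level detour entirely: you pick $a \in [u]_1$, $b \in [v]_1$ and run the dichotomy ($a \geq b$ gives the conclusion outright; $b > a$ gives $G(v \succeq^1 u) = 1$, contradicting the hypothesis). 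Both proofs hinge on exactly the same two facts --- totality of the order on $\mathbb{R}^1$ and nonemptiness of the $1$-cuts guaranteed by $F'(\mathbb{R}^1)$ --- so the mathematical content is the same, but yours works directly from the definition of $G$ and needs no auxiliary lemma. A further merit of your write-up is flagging that the implication genuinely requires $\theta \leq 1$: your counterexample $u = \widehat{\{0\}}$, $v = \widehat{\{1\}}$ shows the statement as literally written (``$\theta > 0$'') fails for $\theta > 1$, a restriction the paper leaves implicit throughout (consistent with $\theta$ being a membership-grade threshold in Buckley's definition).
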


\begin{proof}
  The desired result follows immediately from  Lemma \ref{lac}.
\end{proof}

\begin{tm} \label{nde}
  Suppose that $u,v \in F'(\mathbb{R}^1)$ and $\theta>0$. Then
  $u {\succeq^1_\theta}' v$
  if and only if
   $u \succeq^1_\theta v$.
\end{tm}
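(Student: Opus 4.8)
The plan is to prove the two implications separately, treating the forward one as immediate and concentrating all the content on the converse. For the forward direction, suppose $u\,{\succeq^1_\theta}'\,v$. This is exactly the instance $\succeq\,=\,\succeq^1$ on $\mathbb{R}^1$ of Theorem~\ref{prm}, which already records that $\succeq'_\theta$ implies $\succeq_\theta$; hence $u \succeq^1_\theta v$ follows at once. Unwinding it directly to double-check: in either defining case of ${\succeq^1_\theta}'$ one obtains $G(u\succeq^1 v)\geq\theta$, namely $G(u\succeq^1 v)=1\geq\theta$ in the $\succ'$ case and $G(u\succeq^1 v)\geq\theta$ in the $='$ case, which by Definition~\ref{newaeu} is precisely $u \succeq^1_\theta v$.

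For the converse, I would assume $u\succeq^1_\theta v$, i.e. $G(u\succeq^1 v)\geq\theta$ by Definition~\ref{newaeu}, and split on the value of $G(v\succeq^1 u)$. If $G(v\succeq^1 u)\geq\theta$, then both $G(u\succeq^1 v)\geq\theta$ and $G(v\succeq^1 u)\geq\theta$ hold, which is exactly the clause $u\,{=^1_\theta}'\,v$ of Definition~\ref{buckleyg}, hence $u\,{\succeq^1_\theta}'\,v$. If instead $G(v\succeq^1 u)<\theta$, then Lemma~\ref{lrn} upgrades the hypothesis to $G(u\succeq^1 v)=1$; together with $G(v\succeq^1 u)<\theta$ this is exactly the clause $u\,{\succ^1_\theta}'\,v$, so again $u\,{\succeq^1_\theta}'\,v$. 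The two cases are exhaustive, so the converse holds, and combining the directions gives the stated equivalence.

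The only genuine work sits in the second case of the converse, and it is entirely delegated to Lemma~\ref{lrn}: the step from $G(v\succeq^1 u)<\theta$ to $G(u\succeq^1 v)=1$ is where one-dimensionality is essential (the totality of $\geq$ on $\mathbb{R}^1$ combined with $[u]_1,[v]_1\neq\emptyset$), and it is precisely what forces the prime and non-prime relations to coincide here. I therefore expect this to be the main obstacle in the sense that it is the single non-formal input; everything else is bookkeeping with Definitions~\ref{newaeu} and~\ref{buckleyg}. One caveat to keep in mind throughout is that the convention $\theta\in(0,1]$ is in force: for $\theta>1$ the clause $\succ'$ could hold (since $G(v\succeq^1 u)\le 1<\theta$ is then automatic) while $G(u\succeq^1 v)\geq\theta$ fails, so the forward direction, and the citation of Theorem~\ref{prm}, implicitly require $\theta\leq 1$.
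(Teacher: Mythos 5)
Your proof is correct and follows essentially the same route as the paper: the forward implication is delegated to Theorem~\ref{prm}, and the converse is exactly the case split on $G(v \succeq^1 u)$ (invoking Lemma~\ref{lrn} when $G(v \succeq^1 u) < \theta$) that the paper compresses into the phrase ``deduced directly from Lemma~\ref{lrn}.'' Your closing caveat that $\theta \le 1$ is implicitly assumed (otherwise the $\succ'_\theta$ clause and hence Theorem~\ref{prm} itself can fail) is a correct observation about the paper's conventions rather than a gap in your argument.
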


\begin{proof}
  From Theorem \ref{prm}, we only need to show that    $u \succeq^1_\theta v$
implies
   $u {\succeq^1_\theta}' v$,
which can be deduced
 directly
   from
   Lemma \ref{lrn}.
\end{proof}

\begin{re}
 Let $m\geq2$.
The conclusion in Theorem \ref{nde} may not hold when $u,v$ are fuzzy sets in $F'(\mathbb{R}^m)$.
  In fact,
  it is easy to construct a pair of $u,v \in L(E^m)$
such that   $u \succeq^1_\theta v$ but   $u {\not\succeq^1_\theta}' v$.

\end{re}

\end{document}